\newtheorem{thm}{Theorem}
\newtheorem{cor}{Corollary}
\newtheorem{assum}{Assumption}
\newtheorem{prop}{Proposition}
\newtheorem{defn}{Definition}
\newtheorem{rem}{Remark}
\renewcommand{\Re}{\mathbb{R}}
\newcommand{\calA}{\mathcal{A}}
\newcommand{\calB}{\mathcal{B}}
\newcommand{\calP}{\mathcal{P}}
\newcommand{\calX}{\mathcal{X}}
\newcommand{\calY}{\mathcal{Y}}
\newcommand{\nxn}{{n\times n}}
\newcommand{\Ne}{\mathbb{N}}
\newcommand{\calD}{\mathcal{D}}
\newcommand{\SPn}{\Re^\nxn_{\succ0}}
\newcommand{\Prob}{\mathbb{P}}
\newcommand{\Expec}{\mathbb{E}}
\newcommand{\rhohat}{\hat{\rho}}
\newcommand{\Sph}{\mathbb{S}}
\newcommand{\lambdamin}{\lambda_{\mathrm{min}}}
\newcommand{\spann}{\mathrm{span}}
\DeclareMathOperator*{\argmin}{arg\,min}
\newcommand{\InputBlock}[1]{\State \textbf{Input:}\par\addvspace{2pt} \parbox[t]{\linewidth}{#1}\par\addvspace{6pt}}
\newcommand{\OutputBlock}[1]{\State \textbf{Output:} #1}
\title{\LARGE \bf
A Stochastic-Optimization-Based Adaptive-Sampling Scheme for Data-Driven Stability Analysis of Switched Linear Systems
}
\author{Alexis Vuille, Guillaume O.~Berger and Rapha\"el M.~Jungers
\thanks{This project has received funding from the European Research Council (ERC) under the European Union's Horizon 2020 research and innovation programme under grant agreement No 864017 -- L2C.}
\thanks{GB is a FNRS Postdoctoral Researcher, RJ is a FNRS honorary Research Associate. We are from UCLouvain, Louvain-la-Neuve, Belgium (e-mail:
firstname.lastname@uclouvain.be).}%
}
\begin{document}

\maketitle
\thispagestyle{empty}
\pagestyle{empty}
\begin{abstract}                
We introduce a novel approach based on stochastic optimization to find the optimal sampling distribution for the data-driven stability analysis of switched linear systems. 
Our goal is to address limitations of existing approaches, in particular, the fact that these methods suffer from ill-conditioning of the optimal Lyapunov function, which was shown in recent work to be a direct consequence of the way the data is collected by sampling uniformly the state space.
In this work, we formalize the notion of optimal sampling distribution, using the perspective of stochastic optimization.
This allows us to leverage tools from stochastic optimization to estimate the optimal sampling distribution, and then use it to collect samples for data-driven stability analysis of the system.
We show in numerical experiments (on challenging systems of dimension up to five) that the overall procedure is highly favorable in terms of data usage compared to existing methods using fixed sampling distributions.
Finally, we introduce a heuristic that combines data points from previous samples, and show empirically that this allows an additional substantial reduction in the number of samples required to achieve the same stability guarantees.
\end{abstract}

\begin{keywords}
data-driven methods, stochastic optimization, statistical learning, stability analysis, switched linear systems
\end{keywords}

\section{Introduction}\label{sec:introduction}

In recent years, data-driven methods have gained a lot of attention for the study of cyber-physical systems because of the increasing number of applications in which no model of the system is available. 
At the same time, data has become more and more accessible due to the outbreak of cheap, accurate sensors, user feedback and open-source databases.
Finally, statistical learning, the mathematical field of learning from data, has known many great advances in recent years both in theory and practice \cite{shalevshwartz2014understanding}.
All this together opened the door to a new era in control theory where control and system analysis is made from data harvested from observation of the system and comes with formal guarantees of correctness; we refer the reader to \cite[Chapter~11]{mitra2021verifying} for an introduction and further references on data-driven verification and control of cyber-physical systems.

In this paper, we consider a prototypical class of cyber-physical systems, known as \emph{switched linear systems} \cite{sun2011stability}.
These systems consist of several linear modes among which the system can switch over time.
They appear naturally in a wide range of applications \cite{liberzon2003switching}, or as approximations of more complex systems.
A crucial question in the study of switched linear systems is their stability analysis \cite{sun2011stability}, which turns out to be a very challenging problem in general even when the model of the system is available \cite{jungers2009thejoint}.
For instance, approximating the rate of convergence of the system (known as the Joint Spectral Radius, or JSR) is known to be NP-hard \cite{jungers2009thejoint}.
Nevertheless, several approximation techniques have been proposed in the last decades leading to good results in the model-based setting \cite{jungers2009thejoint,ahmadi2014joint}.

However, a model of the system is not always available.
Therefore, several approaches were proposed in recent years for the data-driven analysis of the JSR of switched linear systems \cite{kenanian2019data,berger2021chanceconstrained,rubbens2021datadriven,wang2021datadriven,vuille2024data}.
These approaches use advanced tools from statistical learning, such as scenario optimization \cite{calafiore2006thescenario}, to learn a Lyapunov function for the system to derive bounds on the JSR and provide probabilistic guarantees on the correctness of the bound.

However, the classical approaches \cite{kenanian2019data,berger2021chanceconstrained,rubbens2021datadriven} strongly suffer from a bad choice of the distribution used to sample the data.
This was shown in \cite{vuille2024data}---where additionally an intuitive approach for finding a better sampling distribution was proposed, demonstrating huge gain in data usage.
In this paper, we leverage these promising preliminary results in two complementary directions.
First, we formalize the notion of \emph{best} sampling distribution, using the perspective of stochastic optimization.
Namely, we search for the sampling distribution that gives \emph{in average} the best conditioning of the learned Lyapunov function (shown in \cite{vuille2024data} to be one of the main sources of conservatism).
Second, we use stochastic optimization techniques (namely stochastic gradient descent) to learn this best sampling distribution form data.
We show with numerical experiments that the overall procedure allows us to certify stability using less samples than other fixed-distribution data-driven methods \cite{berger2021chanceconstrained}.
Finally, we introduce a heuristic that combines data points from previous samples, empirically demonstrating a substantial reduction in the number of samples required to achieve the same stability guarantees, and improving upon the heuristic method of \cite{vuille2024data} in all experiments.


\subsection*{Related Works}

Data-driven stability analysis of switched linear systems is studied in \cite{kenanian2019data,berger2021chanceconstrained,rubbens2021datadriven,wang2021datadriven}.
These approaches suffer from the curse of dimensionality, strongly amplified by the fact that the data points are sampled uniformly, as demonstrated in our recent work \cite{vuille2024data}.
In \cite{vuille2024data}, we introduced the method of adaptive sampling to alleviate the dependency on the sampling, showing a reduction of sample complexity of several orders of magnitude.
Yet, the adaptive sampling methodology in \cite{vuille2024data} is strongly heuristic.
It consists in two phases.
1) Sample points uniformly, and learn a Lyapunov function from these samples.
2) Use this Lyapunov function to obtain a new sampling distribution; sample from this distribution and derive the stability guarantees.
Although simple, this strategy showed great potential.
However, key questions such as ``what is actually a good sampling distribution?'', or ``can we do better by doing more than two steps (each time using the sampling distribution obtained at the previous step)?''.
This work aims to address these questions by (i) formalizing the notion of optimal distribution for data-driven stability analysis of switched linear systems, from the lens of stochastic optimization, and (ii) providing numerical techniques to compute the optimal distribution.
We demonstrate in numerical examples the strong benefits of the proposed solution over the previous approaches.

\paragraph*{Notation}
$\SPn$ denotes the set of positive definite $n\times n$ matrices. Given $P \in \SPn$, we denote the quadratic norm of $P$ as $\|x\|_P = \sqrt{x^TPx}$.
For $N\in\Ne$, $[N]$ denotes the set $\{1,\ldots,N\}$.
We denote the unit sphere in $\Re^n$ by $\Sph^{n-1}$.
Given two datasets of size $N$, $\mathcal{X}\coloneqq\{x_i\}_{i=1}^N$ and $\mathcal{X}'\coloneqq\{x_i'\}_{i=1}^N$, we denote their element-wise pairing as $\mathcal{X}\|\mathcal{X}'\coloneqq\{(x_i,x_i')\}_{i=1}^N$. 

\section{Problem Statement}\label{sec:problem_statement}

We consider a discrete-time \emph{switched linear system} with $m$ modes:
\begin{equation}\label{eq:sys}
x(t+1) \in \{Ax(t) : A\in\calA\},
\end{equation}
wherein $\calA=\{A_1,\ldots,A_m\}\subseteq\Re^\nxn$ is a set of $m$ matrices in $\Re^\nxn$.
Since $\calA$ characterizes the system in \eqref{eq:sys}, in the following, we will often refer to the system simply by $\calA$.
A \emph{trajectory} of $\calA$ is a function $x:\Ne\to\Re^n$ such that for all $t\in\Ne$, the condition in \eqref{eq:sys} holds.

We are interested in the stability of system \eqref{eq:sys}.
We remind that \eqref{eq:sys} is \emph{asymptotically stable} if all trajectories of $\calA$ converge to the origin.
The rate of exponential convergence is called the \emph{Joint Spectral Radius} (JSR) of $\calA$.

\begin{defn}\label{def:jsr-rate-stability}
The \emph{joint spectral radius} of $\calA$, denoted by $\rho(\calA)$, is the infimum of all $r\geq0$ for which there exists $C\geq1$ such that every trajectory $x$ of $\calA$ satisfies that for all $t\in\Ne$, $\lVert x(t)\rVert\leq Cr^t\lVert x(0)\rVert$.
\end{defn}

\subsection{Quadratic Approximation of the JSR}

The JSR is notoriously difficult to approximate, even when the matrices in $\calA$ are known \cite{jungers2009thejoint}.
One way to obtain an upper bound on the JSR is by finding a quadratic \textit{Lyapunov} function for the system. The contraction rate associated with this function then provides an upper bound.

\begin{defn}
Given a positive definite matrix $P\in\SPn$, we define the \emph{contraction rate} of $\calA$ with respect to $P$ by
\[
\rho(\calA,P)=\max_{x\in\Re^n\setminus\{0\},\,A \in\calA} \frac{\|Ax\|_P}{\|x\|_P}.
\]\vskip0pt
\end{defn}

The contraction rate is an upper bound on the JSR (see, e.g., \cite[Proposition~2.8]{jungers2009thejoint}):

\begin{thm}\label{thm:white-box-quadratic}
For any $P\in\SPn$, $\rho(\calA)\leq\rho(\calA,P)$.
\end{thm}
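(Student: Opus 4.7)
The plan is to bound arbitrary trajectories of $\calA$ in the $P$-norm first, using the definition of $\rho(\calA,P)$ as a worst-case one-step contraction ratio, and then translate this $P$-norm bound into a Euclidean-norm bound via the standard equivalence of norms, at which point the definition of the JSR yields the inequality.

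First, I fix an arbitrary $P\in\SPn$ and an arbitrary trajectory $x$ of $\calA$. By definition of a trajectory, for every $t\in\Ne$ there exists some $A_t\in\calA$ with $x(t+1)=A_t\,x(t)$. The definition of $\rho(\calA,P)$ gives $\|A_t\,x(t)\|_P\leq\rho(\calA,P)\,\|x(t)\|_P$ (this is trivially true if $x(t)=0$, and otherwise follows by plugging $x(t)$ into the max). A straightforward induction on $t$ then yields $\|x(t)\|_P\leq\rho(\calA,P)^t\,\|x(0)\|_P$ for all $t\in\Ne$.

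Next, I convert this into a bound on $\|x(t)\|$. Since $P\in\SPn$, the inequality $\sqrt{\lambdamin(P)}\,\|x\|\leq\|x\|_P\leq\sqrt{\lambda_{\max}(P)}\,\|x\|$ holds for every $x\in\Re^n$. Combining with the previous display gives
\[
\|x(t)\|\leq\sqrt{\tfrac{\lambda_{\max}(P)}{\lambdamin(P)}}\,\rho(\calA,P)^t\,\|x(0)\|.
\]
Setting $C=\sqrt{\lambda_{\max}(P)/\lambdamin(P)}$ (which satisfies $C\geq1$ since $\lambda_{\max}(P)\geq\lambdamin(P)>0$) and $r=\rho(\calA,P)$, I have shown that every trajectory $x$ of $\calA$ satisfies $\|x(t)\|\leq Cr^t\|x(0)\|$ for all $t\in\Ne$. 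By Definition \ref{def:jsr-rate-stability}, $\rho(\calA)$ is the infimum of all such $r$, so $\rho(\calA)\leq\rho(\calA,P)$.

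There is no real obstacle here; the result is a textbook fact and every step is elementary. The only minor subtlety to handle cleanly is the trajectory-selection issue (the choice of $A_t$ at each step can depend on the history, which is why the bound must be stated as a worst case over $\calA$ at every instant), and the harmless verification that $C\geq 1$ so that it is admissible in Definition~\ref{def:jsr-rate-stability}.
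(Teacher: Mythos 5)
Your proof is correct. The paper does not actually supply an argument for this theorem---it simply cites \cite{jungers2009thejoint}---and your proof is exactly the standard one found there: the one-step $P$-norm contraction iterated along an arbitrary trajectory, followed by the equivalence $\sqrt{\lambdamin(P)}\,\|x\|\leq\|x\|_P\leq\sqrt{\lambda_{\max}(P)}\,\|x\|$ to produce an admissible constant $C\geq1$ in Definition~\ref{def:jsr-rate-stability}.
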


Hence, quadratic Lyapunov functions allow us to bound the JSR.
However, this approach, as other \emph{model-based} approaches for approximating the JSR, require the knowledge of $\calA$.
This is a limitation in several applications, thereby justifying the use of data-driven methods.

\subsection{Data-Driven Analysis and Random Data Collection}

In the setting considered in this paper (first introduced in \cite{kenanian2019data}), we collect data by setting the system to an initial state $x$ and observing the state $y$ after one time step.
Repeating this process $N$ times yields a data set comprising $N$ one-step trajectories $(x_i, y_i)\in\Re^n\times\Re^n$, where $y_i=Ax_i$ for some $A\in\calA$, for each $i\in[N]$.
We assume that the mode selection is a stochastic process, wherein each mode in $\calA$ has a nonzero probability of being applied independently at each sample:\footnote{Note that even though we consider the mode selection to be a stochastic process for data collection, the probabilistic guarantees we will obtain still hold for all possible trajectories of the system (even ones for which the mode selection is not a stochastic process). However, without assumption 1, that would be not the case as a mode could be never sampled in our dataset for all datasets of finite size. Similarly, while we consider in the rest of the paper different kinds of distributions for the initial states to collect the data and analyse the stability, the stability analysis and the confidence bounds remain valid for all possible trajectories and all possible initial states.}


\begin{assum}\label{assum:uniform-matrix}
There exists $\alpha\in(0,1]$ such that for all $A\in\calA$ and $i\in[N]$,
\[
\Prob\left[\, y_i=Ax_i \mid x_i,\:\{(x_j,y_j)\}_{j\neq i} \,\right] \geq \alpha.
\]\vskip0pt
\end{assum}
Regarding the choice of the initial state of each sample $(x_i,y_i)$, we assume that the initial state $x_i$ can be chosen for each $i\in[N]$.
In particular, we will choose $x_i$ randomly according to some distribution that we can design.
In particular, we will consider the standard Gaussian distribution (reminded below), possibly after applying a change of basis:%
\footnote{Because of the scaling invariance, a data point $(x_i,y_i)$ carries the same information as the data point $(\lambda x_i,\lambda y_i)$ for every $\lambda\neq0$.
This is why sampling with respect to the standard Gaussian distribution is equivalent in this problem to sampling with respect to the uniform distribution on the unit sphere $\Sph^{n-1}$ in $\Re^n$.}

\begin{defn}
A random variable $X$ with value in $\Re^n$ has \emph{standard Gaussian distribution} if its probability density function (pdf) $f$ satisfies for all $x\in\Re^n$, $f(x)\propto e^{-\frac12\lVert x\rVert^2}$.
\end{defn}

\begin{rem}
As we will see in Section~\ref{sec:adaptive_sampling}, the fact that the sampling distribution can be chosen is key in our framework since our approach to improve data-scalability is to learn an optimal sampling distribution.
This assumption is realistic in a wide range of applications, namely when one has access to the system has a (stochastic) input--output black-box.
\end{rem}



\subsection{Fixed Sampling Distribution}\label{sec:previous_bound}

The data-driven approach in \cite{kenanian2019data} (refined in \cite{berger2021chanceconstrained}) provides probabilistic upper bounds on the JSR from data collected from a fixed sampling distribution.
See also \cite{rubbens2021datadriven,wang2021datadriven} for similar data-driven approaches using a fixed sampling distribution.
We remind here the main result of \cite{berger2021chanceconstrained} because this will be useful for the rest of this paper.


The approach of \cite{kenanian2019data,berger2021chanceconstrained} works as follows.
Given a data set $\mathcal{D} = \{(x_i,y_i)\}_{i=1}^N$ consisting of $N$ one-step trajectories, we formulate the problem of finding a positive definite matrix $P$ with the smallest \emph{data-based contraction rate} defined by
\[
\rhohat(\calD,P)=\max_{i\in[N]} \frac{\|y_i\|_P}{\|x_i\|_P}.
\]
Hence, we aim to solve the optimization problem
\begin{equation}\label{prob:data-driven_problem}
\min_{P\in\calP} \, \rhohat(\calD,P).
\end{equation}
where $\calP$ is a closed subset of $\SPn$ (we assume that $I\in\calP$).
Note that \eqref{prob:data-driven_problem} can be solved efficiently, as it is a quasi-convex optimization problem \cite{berger2021chanceconstrained}.
The optimal cost of \eqref{prob:data-driven_problem} is denoted by $\gamma_\star(\calD)$, and the optimal solution, if it exists, by $P_\star(\calD)$.
We assume without loss of generality that if $P_\star(\calD)$ exists, then it is unique (this can be done by using a tie-breaking rule \cite{berger2021chanceconstrained}).
When $\calD$ is clear from the context, we write $\gamma_\star$ and $P_\star$ instead of $\gamma_\star(\calD)$ and $P_\star(\calD)$.

Under some mild assumption (Assumption \ref{assum:no-barabanov}) on $\mathcal{A}$, one can guarantee an upper bound on the JSR of $\calA$ with high confidence (Theorem \ref{thm:data-driven-bound-previous}).\footnote{This guarantees the non-degeneracy property to apply the PAC bounds from Scenario Optimization. See for example \cite{calafiore2006thescenario}.}
\begin{defn}
    A matrix is said to be \textit{Barabanov} if it is diagonalizable and all its eigenvalues have the same modulus.
\end{defn}
\begin{assum}\label{assum:no-barabanov}
The matrices in $\calA$ are not Barabanov.
\end{assum}

\begin{thm}[\cite{berger2021chanceconstrained}]\label{thm:data-driven-bound-previous}
Let $d$ be the dimension of $\spann(\calP)$, and $N\geq d$.
Let $\{x_i\}_{i=1}^N\subseteq\Re^n$ be sampled i.i.d.~following the standard Gaussian distribution.
Let Assumptions \ref{assum:uniform-matrix} and \ref{assum:no-barabanov} hold.
Let $\beta\in(0,1]$.
Then, with probability $1-\beta$ on the sampling of $\calD\coloneqq\{(x_i,y_i)\}_{i=1}^N$, it holds that\footnote{Note that the first inequality in \eqref{eq:bound-jsr} is always satisfied, while the second one is guaranteed to hold with probability at least $1-\beta$.}
\begin{align}
&\rho(\calA) \leq \rho(\calA,P_\star) \leq \gamma_\star \cdot f(\beta,\kappa(P_\star),N,d,\alpha,n), \label{eq:bound-jsr}
\end{align}
wherein
\begin{itemize}
    \item $\kappa(P)=\sqrt{\frac{\det(P)}{\lambdamin(P)^n}}$;
    \item $f(\beta,k,N,d,\alpha,n) = \frac{1}{\sqrt{1-I^{-1}\left(\frac{k}\alpha\Phi^{-1}(\beta;d-1;N);\frac{n-1}{2};\frac{1}{2}\right)}}$;
    \item $I^{-1}(y;a;b)$ is the inverse incomplete regularized beta function, i.e., it is the unique $x\in[0,1]$ such that
    \[
        I(x;a;b) \coloneqq \frac{\int_{0}^x t^{a-1} (1-t)^{b-1} \mathrm{d}t}{\int_{0}^1 t^{a-1} (1-t)^{b-1} \mathrm{d}t} = y;
    \]
    \item $\Phi^{-1}(\beta;\zeta;N)$ is the unique $\epsilon\in[0,1]$ such that
    \[
        \Phi(\epsilon;\zeta;N) \coloneqq \sum_{i=0}^\zeta \binom{N}{i} \epsilon^i (1-\epsilon)^{N-i} = \beta.
    \]
    \item $\alpha$ is defined in Assumption \ref{assum:uniform-matrix}.
\end{itemize}
\end{thm}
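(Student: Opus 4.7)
The first inequality $\rho(\calA) \leq \rho(\calA, P_\star)$ is immediate from Theorem~\ref{thm:white-box-quadratic} applied to the (deterministic) matrix $P_\star$, so the whole work lies in the probabilistic second inequality. My plan is to cast problem~\eqref{prob:data-driven_problem} as a scenario optimization program and then translate its generalization guarantee into a worst-case bound via the rotational invariance of the Gaussian measure and a spherical-cap geometric argument.

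First, I would view \eqref{prob:data-driven_problem} as the minimization of a scalar cost $\gamma$ subject to the $N$ scenario constraints $\|y_i\|_P^2 \leq \gamma^2 \|x_i\|_P^2$ over the $d$-dimensional decision variable living in $\spann(\calP)$. Assumption~\ref{assum:no-barabanov} is the non-degeneracy hypothesis that lets me invoke the Calafiore--Campi scenario bound: with probability at least $1-\beta$ over $\calD$, the optimum $(\gamma_\star, P_\star)$ has violation probability at most $\epsilon \coloneqq \Phi^{-1}(\beta; d-1; N)$ when drawing a fresh pair $(x,y)$ from the same law. Here ``violation'' means $\|y\|_{P_\star} > \gamma_\star \|x\|_{P_\star}$.

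Next, I would peel off the randomness in the mode selection. Conditioning on any $A \in \calA$, Assumption~\ref{assum:uniform-matrix} gives the (pointwise) bound $\Prob_{x\sim\calN}\bigl[\|Ax\|_{P_\star} > \gamma_\star \|x\|_{P_\star}\bigr] \leq \epsilon/\alpha$ for every fixed mode. Since the standard Gaussian is rotationally invariant and the event $\{\|Ax\|_{P_\star} > \gamma_\star \|x\|_{P_\star}\}$ is a cone, this probability equals the uniform measure on $\Sph^{n-1}$ of that cone's intersection with the sphere.

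The main technical step is then geometric: I need to convert this bound on the Euclidean-uniform measure of bad directions into a bound on the ratio $\rho(\calA, P_\star) / \gamma_\star$. The natural coordinates are the $P_\star$-weighted ones, where $\{x: \|Ax\|_{P_\star} > \gamma \|x\|_{P_\star}\}$ intersected with the $P_\star$-unit sphere is exactly a spherical cap whose angular radius $\theta$ satisfies $\sin^2\theta = 1 - (\gamma/\rho(\calA,P_\star))^2$, with measure $I(\sin^2\theta; \tfrac{n-1}{2}, \tfrac12)$. Changing back to Euclidean-uniform measure inflates the cap measure by at most the factor $\kappa(P_\star) = \sqrt{\det(P_\star)/\lambdamin(P_\star)^n}$, which is precisely the distortion between the two uniform measures on the two ellipsoid/sphere. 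Combining with the previous step gives $I\bigl(1-(\gamma_\star/\rho(\calA,P_\star))^2; \tfrac{n-1}{2}, \tfrac12\bigr) \leq (\kappa(P_\star)/\alpha)\, \epsilon$, and solving for $\rho(\calA,P_\star)$ through the inverse incomplete regularized beta function yields exactly the claimed expression $\gamma_\star \cdot f(\beta, \kappa(P_\star), N, d, \alpha, n)$.

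The hard part is step three: carefully justifying the $\kappa(P_\star)$ distortion factor in the change of measure between the Euclidean sphere and the $P_\star$-sphere, and ensuring that the cap-measure identity in $P_\star$-coordinates is tight (it relies on the worst-direction being a single eigendirection, which is where Assumption~\ref{assum:no-barabanov} again plays a role to avoid degenerate cases where the supremum is attained on a positive-measure subset).
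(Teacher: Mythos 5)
The paper does not prove this statement: Theorem~\ref{thm:data-driven-bound-previous} is imported verbatim from \cite{berger2021chanceconstrained}, so there is no in-paper proof to compare against. Measured against the actual argument in \cite{kenanian2019data,berger2021chanceconstrained}, your three-step reconstruction (scenario-optimization generalization bound with $d$ decision variables giving $\epsilon=\Phi^{-1}(\beta;d-1;N)$; de-conditioning on the mode via Assumption~\ref{assum:uniform-matrix} to get $\epsilon/\alpha$ per matrix; a spherical-cap lower bound on the measure of the violated directions, with the $\kappa(P_\star)$ factor accounting for the distortion between the Euclidean-uniform and $P_\star$-uniform measures on the sphere) is the right skeleton and leads to exactly the stated $f$. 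Two imprecisions are worth fixing. First, \eqref{prob:data-driven_problem} is quasi-convex, not convex, so you cannot invoke the Calafiore--Campi bound off the shelf; the cited works use its extension to quasi-convex (generalized eigenvalue) scenario programs, and Assumption~\ref{assum:no-barabanov} serves precisely to guarantee the non-degeneracy needed there. Second, in $P_\star$-coordinates the violation set $\{x:\lVert Ax\rVert_{P_\star}>\gamma\lVert x\rVert_{P_\star}\}$ is the exterior of an ellipsoidal cone, not exactly a cap; it merely \emph{contains} the (double) cap of half-angle $\theta$ with $\sin^2\theta=1-(\gamma/\rho(\calA,P_\star))^2$ around the top singular direction, which is all that is needed since only a lower bound on its measure is used --- so your concern about ``tightness'' of the cap identity is moot, and that is not where Assumption~\ref{assum:no-barabanov} enters.
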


\begin{rem}
The \emph{inflation factor} $f(\beta,\kappa(P_\star),N,d,\alpha,n)$
is the source of the additional conservatism of the data-driven approach, compared to the model-based approach.
Unfortunately, this factor increases exponentially with the value of $\kappa(P_\star)$. Hence, it is crucial to make the value of $\kappa(P_\star)$ as small as possible.
This is the purpose of the adaptive sampling approach, described next.
\end{rem}

\subsection{Adaptive Sampling Distribution}

The two-step approach was proposed in \cite{vuille2024data} as an effective method to reduce the value of $\kappa(P_\star)$ through adaptive sampling.
The key idea behind this approach is that a change of basis of the state variable $x$ can change the value of $P_\star$ and thereby the value of $\kappa(P_\star)$. 
Hence, our approach aims to find the change of coordinates for which $\kappa(P_\star)$ is the smallest.

More precisely, the change of basis and sampling distribution works as follows.
Given an invertible matrix $B \in \mathbb{R}^{n \times n}$ and a data set $\calX = \{x_i\}_{i=1}^N$ sampled i.i.d.~from the standard Gaussian distribution, we define the data set in the basis $B$ by $\calX' \coloneqq \{x_i' \coloneqq B x_i\}_{i=1}^N$.
This transformed data set can then be fed to the system oracle, providing the data set $\calY' \coloneqq \{y_i'\}_{i=1}^N$ Finally, we can apply the reverse change of basis to obtain the data set $\calY \coloneqq \{y_i \coloneqq B^{-1} y_i'\}_{i=1}^N$. This sampling process is illustrated in Figure \ref{fig:adaptive-sampling}.
We denote the resulting datasets by $\mathcal{D}=\calX\|\calY$ and $\mathcal{D}'= \calX'\|\calY'$.
The interest of the change of basis is that if $B$ is chosen appropriately, then $\kappa(P_\star(\calD))$ can be expected to be close to one.
In fact, it is shown in \cite[Proposition 8]{berger2021chanceconstrained} that in the model-based setting (i.e., when $\calA$ is known and $N\to\infty$), one can choose $B$ so that $\kappa(P_\star(\calD))=1$ with probability one.
However, in the data-driven context, $\calA$ is unknown and we want to keep $N$ small (thus finite).
Therefore, \cite{vuille2024data} proposed a two-step approach, consisting in first guessing a change of basis $B$ by using a initial dataset $\calD_\circ$ (of size $N_\circ)$, and then using this $B$ to build the dataset $\calD$ (of size $N$), and find $\gamma_\star(\calD)$ and $P_\star(\calD)$.
This results in an approach that requires (empirically) much less data ($N_\circ+N$), compared to fixed-sampling distribution approaches, to provide stability guarantees.

\begin{figure}
    \centering
    \includegraphics[width=\linewidth]{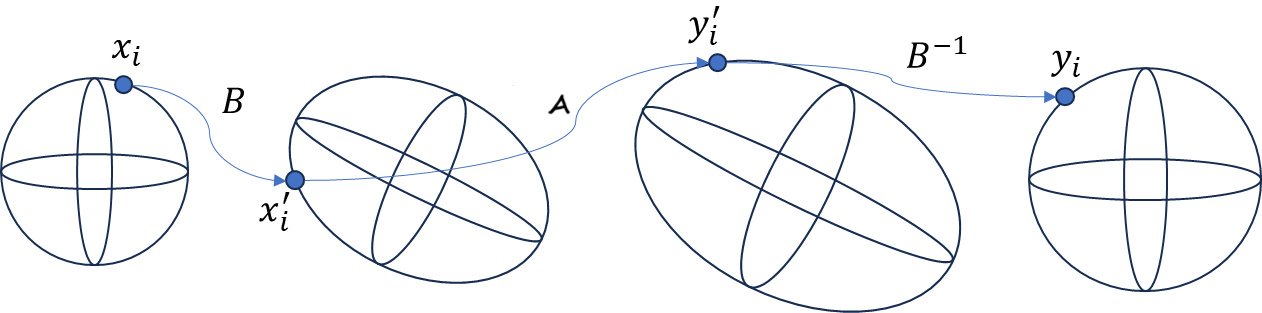}
    \caption{Sampling with change of basis $B$.}
    \label{fig:adaptive-sampling}
     \vspace{-5mm}
\end{figure}

Yet, despite promising empirical results, the construction of the change of basis $B$ in \cite{vuille2024data} is essentially intuitive.
Also, the notion of ``good'' change of basis is not clearly defined.
We address these challenges in Section \ref{sec:adaptive_sampling} below.
Then, in Section \ref{sec:heuristic}, we build upon the results in Section \ref{sec:adaptive_sampling} to provide a heuristic aimed at improving the data usage even further, as demonstrated empirically on numerical examples.

\section{Adaptive Sampling Through the Lens of Stochastic Optimization}\label{sec:adaptive_sampling}

Building on the observations made in the previous section, we seek a change of basis $B$ for which $\kappa(P_\star(\calD))$ is expected to be small when $\calD$ is built as explained in the previous section.
This will lead to the definition of the optimal $B$ as the solution of a stochastic optimization problem.
Building on this, we will then propose an numerical method to find the optimal change of basis.


Given $B\in\Re^\nxn$ invertible, we denote the distribution of the datasets $\calD=\calX\|\calY$ and $\calD'= \calX'\|\calY'$, built as explained in Section \ref{sec:problem_statement}, by $D^N(B)$ and $D'^N(B)$ respectively.





\subsection{Stochastic Optimization Problem}

We formalize the property of being the optimal change of basis $B$, which captures the fact that the \emph{expected} value of $\log(\kappa(P_{\star}(\calD)))$ will be minimized among all $B$ in some subset $\calB$ of invertible $\nxn$ matrices\footnote{In our experiments, we consider $\mathcal{B}$ to be a compact subset of positive symmetric definite matrice of the from $\mathcal{B} \coloneqq \{B \in \SPn: I \preceq B = B^T \preceq \alpha I\}$, for some $\alpha > 1$, which guarantees the existence of a solution.}:
\begin{equation}\label{eq:so-basic}
    \min_{B \in \calB}\, \Expec_{\calD \sim D^N(B)}[\log(\kappa(P_{\star}(\calD)))].
\end{equation}

We consider the logarithm as it leads in the end to a more numerically stable method to solve the problem.
We now introduce a closely related problem, which will be used for optimization:
\begin{equation}\label{eq:so-special}
    \min_{B \in \calB}\, \Expec_{\calD' \sim D'^N(B)} [\log(\kappa(B^{\top} P_{\star}(\calD)B))]
\end{equation}
It turns out that solving \eqref{eq:so-basic} or \eqref{eq:so-special} is equivalent:

\begin{thm}
Problems \eqref{eq:so-basic} and \eqref{eq:so-special} have the same set of optimal solutions.
\end{thm}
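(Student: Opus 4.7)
The plan is to establish the stronger fact that the two objective functions are pointwise equal on $\calB$, from which equality of the optimal solution sets follows at once. The whole argument rests on a change-of-basis identity for $P_\star$, combined with the deterministic coupling between $\calD$ and $\calD'$ built into the sampling scheme.

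First I would prove the invariance of $\rhohat$ under the change of basis $B$. Using the componentwise relations $x_i' = Bx_i$ and $y_i' = By_i$, a direct computation gives
$$\|x_i'\|_Q^2 \;=\; x_i^T B^T Q B\, x_i \;=\; \|x_i\|_{B^T Q B}^2,$$
and similarly for $y_i'$, so that $\rhohat(\calD',Q) = \rhohat(\calD, B^T Q B)$ for every $Q \in \SPn$. Under the natural assumption that $\calP$ is invariant under the congruence $P \mapsto B^T P B$ for $B \in \calB$ (automatic for the canonical choice $\calP = \SPn$), minimizing both sides yields $\gamma_\star(\calD) = \gamma_\star(\calD')$; uniqueness of the minimizer (granted by the tie-breaking rule) then delivers the congruence identity
$$B^T P_\star(\calD') B \;=\; P_\star(\calD).$$

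Plugging this into the integrand of \eqref{eq:so-special} gives $\log\kappa(B^T P_\star(\calD') B) = \log\kappa(P_\star(\calD))$ for every realization, and because $\calD$ is a deterministic function of $\calD'$ (componentwise multiplication by $B^{-1}$), the two expectations coincide:
$$\Expec_{\calD' \sim D'^N(B)}\bigl[\log\kappa(B^T P_\star(\calD') B)\bigr] \;=\; \Expec_{\calD \sim D^N(B)}\bigl[\log\kappa(P_\star(\calD))\bigr].$$
Hence the objective of \eqref{eq:so-special} coincides pointwise with that of \eqref{eq:so-basic} on $\calB$, and the two problems share exactly the same set of optimal solutions.

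The main obstacle is the passage from the invariance of $\rhohat$ to the congruence identity for $P_\star$: this requires $\calP$ to be stable under $P \mapsto B^T P B$ for every $B \in \calB$. This is immediate for $\calP = \SPn$, but has to be verified (or assumed) for more restricted feasible sets; once granted, the remaining steps are routine substitutions and an application of the change-of-variables formula under the deterministic coupling of the two datasets.
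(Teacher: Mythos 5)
Your proof is correct and follows essentially the same route as the paper: the key step in both is the congruence identity $P_\star(\calD) = B^\top P_\star(\calD')B$, obtained via the change of variable $P' = B^{-\top}PB^{-1}$, which makes the two integrands equal pointwise under the deterministic coupling of $\calD$ and $\calD'$. Your explicit flagging of the requirement that $\calP$ be invariant under $P\mapsto B^\top P B$ (and, implicitly, that the tie-breaking rule respect this congruence) is a caveat the paper's proof glosses over, but it does not change the argument.
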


\begin{proof}
Observe that
\begin{align*} 
    P_{\star}(\calX\|\calY) &= \argmin_P \max_{x_i\in\calX,y_i\in\calY} \frac{\|y_i\|_P}{\|x_i\|_P} \\
    &= \argmin_P \max_{x_i'\in\calX',y_i'\in\calY'} \frac{\|B^{-1}y_i'\|_P}{\|B^{-1}x_i'\|_P}\\
    &= B^{\top} \argmin_{P'} \max_{x_i'\in\calX',y_i'\in\calY'} \frac{\|y_i'\|_{P'}}{\|x_i'\|_{P'}} B\\
    &= B^{\top} P_{\star}(\calX'\|\calY') B,
\end{align*}
where the second equality come from the definition of the sets $\calX,\calX',\calY,\calY'$ and the third equality is obtained by doing a change of variable $P' \coloneqq B^{-\top} P B^{-1}$.
\end{proof}

\subsection{Stochastic Gradient Descent}

We solve \eqref{eq:so-special} by employing a stochastic gradient algorithm.
Since the distribution involved in the objective function of \eqref{eq:so-special} depends on the decision variable $B$, we use a stochastic gradient algorithm accounting for \emph{decision-dependent distribution}; see, e.g., \cite{perdomo2020performative}.

Concretely, the method works as follows: given an estimate of the optimal sampling distribution $B_k$, we sample a dataset $\calD_k'$ according to $D'^N(B_k)$ and use this dataset to compute the gradient direction as
\[
g_k \coloneqq \nabla_B \log(\kappa(B^{\top} P_{\star}(\calD_k') B)).
\]
We then update the sampling distribution as $B_{k+1}' = B_k - \eta_k g_k$, for some predefined step size $\eta_k>0$.
If needed, we project on $B_{k+1}'$ on $\calB$, giving $B_{k+1}$.
As a first estimate of the optimal sampling distribution, we use $B_0 \coloneqq I$, which corresponds to no change of basis.

Using the formula of $\kappa(P)$, we can derive an explicit formula for the gradient\footnote{For some values of $P^{\top}BP$ (with Lebesgue zero measure) it's is only a subdiferential; see Appendix A.}:
\begin{equation}
    \nabla_B \log(\kappa(B^{\top} P B)) = B^{-T} - \frac{n PBvv^\top}{\lambda_{\mathrm{min}}(B^{\top} P B)},
\end{equation}
where $v \coloneqq v_{\mathrm{min}}(B^{\top} P_k B)^{\top}$ is the eigenvector associated to $\lambda_{\mathrm{min}}(B^{\top} P_k B)$, the minimum eigenvalue of $B^{\top} P_k B$.
The proof can be found in the Appendix A.

After $T$ steps of the stochastic gradient descent, we have our final estimate $B_T$ of the optimal sampling distribution.
We use it to sample a dataset $\calD_T$ according to $D^N(B_T)$, and we compute the probabilistic upper bound as $\gamma_\star(\calD_T) \cdot f(\beta,\kappa(P_\star(\calD_T)),N,d,\alpha,n)$.
This gives Algorithm \ref{alg:SGD}.
\begin{algorithm}
\caption{Stochastic Optimization Upper Bound}
\label{alg:SGD}
\begin{algorithmic}[1]
\InputBlock{
    - A black-box switched linear system $\mathcal{A}$.\\
    - $T$: number of iterations.\\
    - $(\eta_k)_{k=0}^{T-1}$: step sizes.\\
    - $N_{batch}$: batch size.
}
\State Set $B_0 \coloneqq I$
\For{$k = 0$ to $T-1$}
    \State Sample $\calD_k' \coloneqq \{(x_i',y_i')\}_{i=1}^{N_{batch}}\sim D'^{N_{batch}}(B_k)$.
    \State Using $\calD_k'$, solve \eqref{prob:data-driven_problem} to compute $P_k \coloneqq P_{\star}(\calD_k')$
    \State Compute the gradient: $g_k \coloneqq \nabla_B \log(\kappa(B^{\top} P_k B))$
    \State Update the estimate: $B_{k+1}' \coloneqq B_{k} - \eta_k g_k$
    \State Project $B_{k+1}'$ on $\calB$ to obtain $B_{k+1}$
\EndFor
\State Sample $\calD_T \coloneqq \{(x_i,y_i)\}_{i=1}^{N_{batch}}\sim D^{N_{batch}}(B_T)$.
\State Solve~\eqref{prob:data-driven_problem} to get $\gamma_\star(\calD_T)$ and $P_\star(\calD_T)$.
\OutputBlock{$\gamma_\star(\calD_T) \cdot f(\beta,\kappa(P_\star(\calD_T)),N_{batch},d,\alpha,n)$}
\end{algorithmic}
\end{algorithm}

It turns out that if $\kappa(P_\star(\calD_T))$ is expected to be close to one (which can be estimated from the objective value of Algorithm \ref{alg:SGD}), then it can be beneficial to use $\calP=\{I\}$ (i.e., fix $P=I$), because the small increase of $\gamma_\star$ (since $\calP$ is more restricted) will be compensated by the fact that $f(\beta,k,N,d,\alpha,n)$ is smaller when $d=1$:

\begin{prop}\label{prop:fixing-P}
Let $\calD=\{(x_i,y_i)\}_{i=1}^N$.
It holds that $\gamma_\star\leq\rhohat(\calD,I)\leq\gamma_\star\kappa(P_\star)$.
\end{prop}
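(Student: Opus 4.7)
The plan is to prove the two inequalities separately, with the left one being essentially immediate and the right one reducing to a simple comparison of matrix norms.

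For the left inequality $\gamma_\star \leq \rhohat(\calD,I)$, I would just observe that by the assumption stated below \eqref{prob:data-driven_problem}, we have $I \in \calP$, so $\gamma_\star = \min_{P \in \calP} \rhohat(\calD,P) \leq \rhohat(\calD,I)$.

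For the right inequality, the natural approach is to compare the ratios $\|y_i\|/\|x_i\|$ with $\|y_i\|_{P_\star}/\|x_i\|_{P_\star}$ pointwise, and then take the maximum over $i$. Using the standard Rayleigh bounds $\lambdamin(P)\|v\|^2 \leq \|v\|_P^2 \leq \lambda_{\max}(P)\|v\|^2$, I would derive
\[
\frac{\|y_i\|}{\|x_i\|} \;\leq\; \sqrt{\tfrac{\lambda_{\max}(P_\star)}{\lambdamin(P_\star)}}\cdot\frac{\|y_i\|_{P_\star}}{\|x_i\|_{P_\star}}.
\]
Taking the maximum over $i$ then gives $\rhohat(\calD,I) \leq \sqrt{\lambda_{\max}(P_\star)/\lambdamin(P_\star)}\cdot \gamma_\star$.

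The only remaining step is to relate the usual square-root condition number to the quantity $\kappa(P_\star)$ appearing in the statement. Writing the spectral decomposition of $P_\star$ with eigenvalues $\lambda_1,\ldots,\lambda_n$, I would use
\[
\kappa(P_\star)^2 \;=\; \frac{\det(P_\star)}{\lambdamin(P_\star)^n} \;=\; \prod_{j=1}^n \frac{\lambda_j}{\lambdamin(P_\star)} \;\geq\; \frac{\lambda_{\max}(P_\star)}{\lambdamin(P_\star)},
\]
since every factor in the product is at least $1$ and one of them equals $\lambda_{\max}(P_\star)/\lambdamin(P_\star)$. Combining this with the previous display yields $\rhohat(\calD,I) \leq \gamma_\star \kappa(P_\star)$, as desired.

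I do not anticipate any real obstacle; the whole argument is a short calculation. The only subtlety worth flagging is that $\kappa(P)$ in this paper is defined via the determinant rather than as the usual condition number, so one must take a moment to verify the bookkeeping inequality $\sqrt{\lambda_{\max}/\lambdamin} \leq \kappa(P)$, but this follows cleanly from the spectral representation.
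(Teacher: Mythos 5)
Your proof is correct and follows essentially the same route as the paper's: both arguments reduce the claim to the eigenvalue bound $\lambda_{\max}(P_\star)\leq\kappa(P_\star)^2\lambdamin(P_\star)$ (the paper obtains it by normalizing $\lambdamin(P_\star)=1$ and noting $P_\star\preceq\kappa(P_\star)^2 I$, you obtain it from the product of eigenvalue ratios) and then chain the Rayleigh-quotient inequalities on $\lVert y_i\rVert$ and $\lVert x_i\rVert$. No gaps.
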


\begin{proof}
The first inequality is direct from \eqref{prob:data-driven_problem}.
To prove the second inequality, first assume without loss of generality that $\lambdamin(P_\star)=1$.
Denote $k=\kappa(P_\star)$.
Then, observe that $\det(P_\star)\leq k^2$, which implies that $I\preceq P_\star\preceq k^2I$.
Hence, we get that for each $i\in[N]$,
\[
\lVert y_i\rVert^2\leq y_i^\top P_\star y_i^{} \leq \gamma_\star^2 x_i^\top P_\star x_i^{} \leq \gamma_\star^2k^2 \lVert x_i\rVert.
\]
This shows that $\rhohat(\calD,I)\leq k\gamma_\star$.
\end{proof}

\begin{cor}\label{cor:fixing-P}
Let $B\in\Re^\nxn$ be invertible.
For any $\epsilon>1$, it holds with probability $1-\delta/\log(\epsilon)$ on $\calD\sim D^N(B)$ that $\rhohat(\calD,I)\leq\epsilon\gamma_\star(\calD)$, where $\Expec_{\calD \sim D^N(B)}[\log(\kappa(P_{\star}(\calD)))]=\delta$.
\end{cor}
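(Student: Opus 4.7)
The plan is to combine Proposition \ref{prop:fixing-P} with a simple Markov inequality applied to the non-negative random variable $\log(\kappa(P_\star(\calD)))$, whose expectation is $\delta$ by hypothesis.

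First I would verify that $\log(\kappa(P_\star(\calD)))$ is a non-negative random variable, so that Markov's inequality applies cleanly. This amounts to showing $\kappa(P) \geq 1$ for every $P \in \SPn$: writing the eigenvalues of $P$ as $0 < \lambda_{\min}(P) = \lambda_1 \leq \cdots \leq \lambda_n$, we have $\kappa(P)^2 = \det(P)/\lambdamin(P)^n = \prod_{i=1}^n \lambda_i/\lambda_1 \geq 1$. Thus $\log\kappa(P_\star(\calD)) \geq 0$ almost surely.

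Next, by Markov's inequality applied to the positive threshold $\log(\epsilon) > 0$,
\[
\Prob_{\calD \sim D^N(B)}\!\bigl[\log\kappa(P_\star(\calD)) \geq \log(\epsilon)\bigr] \;\leq\; \frac{\Expec_{\calD \sim D^N(B)}[\log\kappa(P_\star(\calD))]}{\log(\epsilon)} \;=\; \frac{\delta}{\log(\epsilon)}.
\]
Taking complements, with probability at least $1 - \delta/\log(\epsilon)$ on the draw of $\calD$, we have $\kappa(P_\star(\calD)) \leq \epsilon$. On this event, Proposition \ref{prop:fixing-P} gives $\rhohat(\calD,I) \leq \gamma_\star(\calD) \, \kappa(P_\star(\calD)) \leq \epsilon\, \gamma_\star(\calD)$, which is exactly the claim.

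I do not anticipate any real obstacle here; the statement is essentially a packaging of Proposition \ref{prop:fixing-P} via a tail bound. The only subtlety worth flagging in the write-up is the non-negativity of $\log\kappa$, which justifies the direct use of Markov (as opposed to Chebyshev or a shifted version), and the fact that we need $\epsilon > 1$ precisely to ensure $\log(\epsilon) > 0$ so that the inequality is meaningful and the probability bound is well-defined.
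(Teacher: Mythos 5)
Your proof is correct and follows essentially the same route as the paper: Markov's inequality combined with Proposition~\ref{prop:fixing-P}. The only (immaterial) difference is that you apply Markov to $\log(\kappa(P_\star(\calD)))$ and then invoke the proposition on the resulting event, whereas the paper applies Markov directly to $\log(\rhohat(\calD,I)/\gamma_\star(\calD))$, whose nonnegativity and expectation bound $\delta$ both follow from the proposition; since $\log(\rhohat(\calD,I)/\gamma_\star(\calD))\leq\log(\kappa(P_\star(\calD)))$ pointwise, both yield the same $1-\delta/\log(\epsilon)$ guarantee.
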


\begin{proof}
Apply Markov's inequality on $\log(\frac{\rhohat(\calD,I)}{\gamma_\star(\calD)})$ which is always nonnegative by Proposition \ref{prop:fixing-P}, and its expectation is smaller than or equal to $\delta$ by Proposition \ref{prop:fixing-P}.
\end{proof}



\subsection{Open Questions on Convergence and Optimality}

\subsubsection{Uniqueness of the solution}

Experimentally, \eqref{eq:so-basic} and \eqref{eq:so-special} seem to admit only one local optimum (up to a nonzero scaling) which is the global optimum.
Indeed, one can see on the Figure~\ref{fig:comparison} that the data-driven JSR converges the toward model-based one.
However, a formal proof of this claim would be valuable. 

\subsubsection{Convergence}

Furthermore, experimental observations indicate that stochastic gradient descent seem to converge, provided the step sizes satisfy the conditions $\sum_{k=0}^{\infty} \eta_k = +\infty$ and $\lim_{k \rightarrow +\infty} \eta_k = 0$.

Establishing a theorem that guarantees convergence under specific conditions would be highly desirable. The main challenge for this lies in the fact that the probability distribution involved in the expectation depends on the decision variable. There exist some works that obtain such guarantees but they require the objective to be strongly convex \cite{perdomo2020performative}, which is not our case.
We will address these questions in the future.

\section{Reusing Previous Samples---a Heuristic Approach}\label{sec:heuristic}

The stochastic gradient descent algorithm has a key practical limitation in that each iteration uses only the new samples without exploiting the previous ones.
This can result in a prohibitively high overall number of samples.
This was already observed in previous work, leading to extended notions of the SGD, such as \emph{multi-pass} SGD \cite{lei2021generalization}.

In this section, we propose a modification of Algorithm \ref{alg:SGD} that reuses previous samples, and is ad-hoc to our problem.
Although not theoretically grounded, we show experimentally that this modified algorithm outperforms Algorithm \ref{alg:SGD} (and multi-pass SGD\footnote{The reason for our algorithm to outperform multi-pass SGD can be that the distribution is decision-dependent, thereby making the gradient computed from data collected with a different distribution less relevant.}) in terms of number of data needed to provide stability guarantees.\footnote{By heuristic method we mean that the computed change of basis has no whatsoever guarantee to converge to an optimal change of basis but nevertheless confidence bound from Theorem 1 still apply as it holds for any change of basis used.}

The two key differences of the modified algorithm are that (i) all previous and current samples are used in $\calD_k$ to compute $\gamma_\star(\calD_k)$ and $P_\star(\calD_k)$, and (ii) instead of moving in the direction of the gradient of the objective function with respect to $B$, we move in the direction of $P_\star(\calD_k)^{-1/2}$, which corresponds to the minimizer of $\log(\kappa(B^\top P_\star(\calD_k)B)$.
This approach was obtained as a heuristic, testing different options for choosing the update direction.



More precisely, our approach is as follows.
Starting with $B_0 = I$, we sample an initial dataset $\calD_0 \coloneqq \{(x_i',y_i')\}_{i=1}^{N_0}$ according to $D'^{N_0}(B_0)$.
Then, at each step $k$, given $\calD_k$ and $B_k$, we update $B$ in the direction of $P_{\star}(\calD_k)^{-1/2}$:
\[
B_{k+1} \coloneqq (1 - \eta_k) B_k + \eta_k P_{\star}(\calD_k)^{-1/2}.
\]
Next, we draw a new sample $\{(x_k',y_k')\}$ from $D'(B_{k+1})$ and augment the dataset: $\calD_{k+1} \coloneqq \calD_k \cup \{(x_k',y_k')\}$.
This process continues until a predefined convergence criterion is satisfied.
In practice, we consider the algorithm converged when the average difference between successive iterations falls below a predefined threshold $\epsilon>0$, or when a maximum number of iterations is reached.
This is implemented in Algorithm \ref{alg:HEUR}.

\begin{algorithm}
\caption{Heuristic Optimization Upper Bound}
\label{alg:HEUR}
\begin{algorithmic}[1]
\InputBlock{
    - A black-box switched linear system $\mathcal{A}$.\\
    - $N_0$: initial sample size.\\
    - $N$: total sample budget.\\
    - $T$: number of iterations.\\
    - $(\eta_k)_{k=0}^{T-1} \subseteq [0,1]$: step sizes.\\
    - $\epsilon > 0$, $K\in\Ne$: convergence criteria parameters.
}
\State Initialize $B_0 \coloneqq I$.
\State Sample $\calD_0' \coloneqq \{(x_i',y_i')\}_{i=1}^{N_0}\sim D'^{N_0}(B_0)$.
\For{$k = 0$ to $T-1$}
    \State Solve \eqref{prob:data-driven_problem} using $\calD_k'$ to compute $P_k \coloneqq P_{\star}(\calD_k')$.
    \State Compute the direction as: $g_k \coloneqq B_k^{} - P_k^{-1/2}$.
    \State Update: $B_{k+1} \coloneqq B_k - \eta_k g_k$.
    \State \textbf{if} $\sum_{j=k-K}^{k} \|B_{j+1}-B_j\| \leq \epsilon$, terminate loop.
    \State Sample $\{(x_k',y_k')\}\sim D'^{1}(B_{k+1})$
    \State Augment the dataset: $\calD_{k+1} = \calD_k \cup \{(x_k',y_k')\}$.
\EndFor
\State Sample $\calD_T\coloneqq\{(x_i,y_i)\}_{i=1}^{N-\lvert\calD_T\rvert}\sim D^{N-\lvert\calD_T\rvert}(B_T)$.
\State Solve~\eqref{prob:data-driven_problem} to get $\gamma_\star(\calD_T)$ and $P_\star(\calD_T)$.
\OutputBlock{$\gamma_\star(\calD_T) \cdot f(\beta,\kappa(P_\star(\calD_T)),N-\lvert\calD_T\rvert,d,\alpha,n)$}
\end{algorithmic}
\end{algorithm}


\section{Experimental Results}

We demonstrate the effectiveness of our methods (Algorithms \ref{alg:SGD} and \ref{alg:HEUR}) on a synthetic example and on a consensus problem.
We also compare them with 1) the approach without adaptive sampling from \cite{berger2021chanceconstrained}, and 2) the state-of-the-art resampling technique from \cite{vuille2024data}.

\subsection{Synthetic Example}

We applied the four data-driven approaches on a randomly generated system of dimension $3$ with three modes.
To obtain statistics, we averaged the results over 25 experiences.
For the two-step approach from \cite{vuille2024data}, we used the heuristic from this paper for dataset splitting and parameters $\delta_1=10^2$, $\delta_2=1$.
For Algorithm \ref{alg:SGD}, we used $N_{\text{batch}}=200$, $T = \lfloor N / N_{\text{batch}} \rfloor$, and $\eta_k = 0.3 / (k+1)$.
For Algorithm \ref{alg:HEUR}, we used $T = \lfloor N / 2 \rfloor$, $\eta_k=0.3$, with convergence criterion parameters $\epsilon = 10^{-4}$ and $K=10$.
We considered $\mathcal{P} \coloneqq \{P \in \SPn: I \preceq P \preceq 10^2I\}$ and for Algorithm \ref{alg:SGD}, $\mathcal{B} \coloneqq \{B \in \SPn: I \preceq B = B^T \preceq 10^2I\}$.

The results---depicted in Figure \ref{fig:comparison}---clearly showcase the advantages of the adaptive sampling approach, leading to significantly better guarantees compared to the basic non-adaptive sampling method \cite{berger2021chanceconstrained}.
While Algorithm \ref{alg:SGD} outperforms the non-adaptive method, its performance remains suboptimal compared to the other adaptive sampling techniques (\cite{vuille2024data} and Algorithm \ref{alg:HEUR}).
However, Algorithm \ref{alg:HEUR}, which reuses all previous samples in a heuristic manner, achieves strong performance, surpassing the state-of-the-art method \cite{vuille2024data}.

\begin{figure}
    \centering
    \includegraphics[width=\linewidth,trim={25mm 3mm 35mm 15mm},clip]{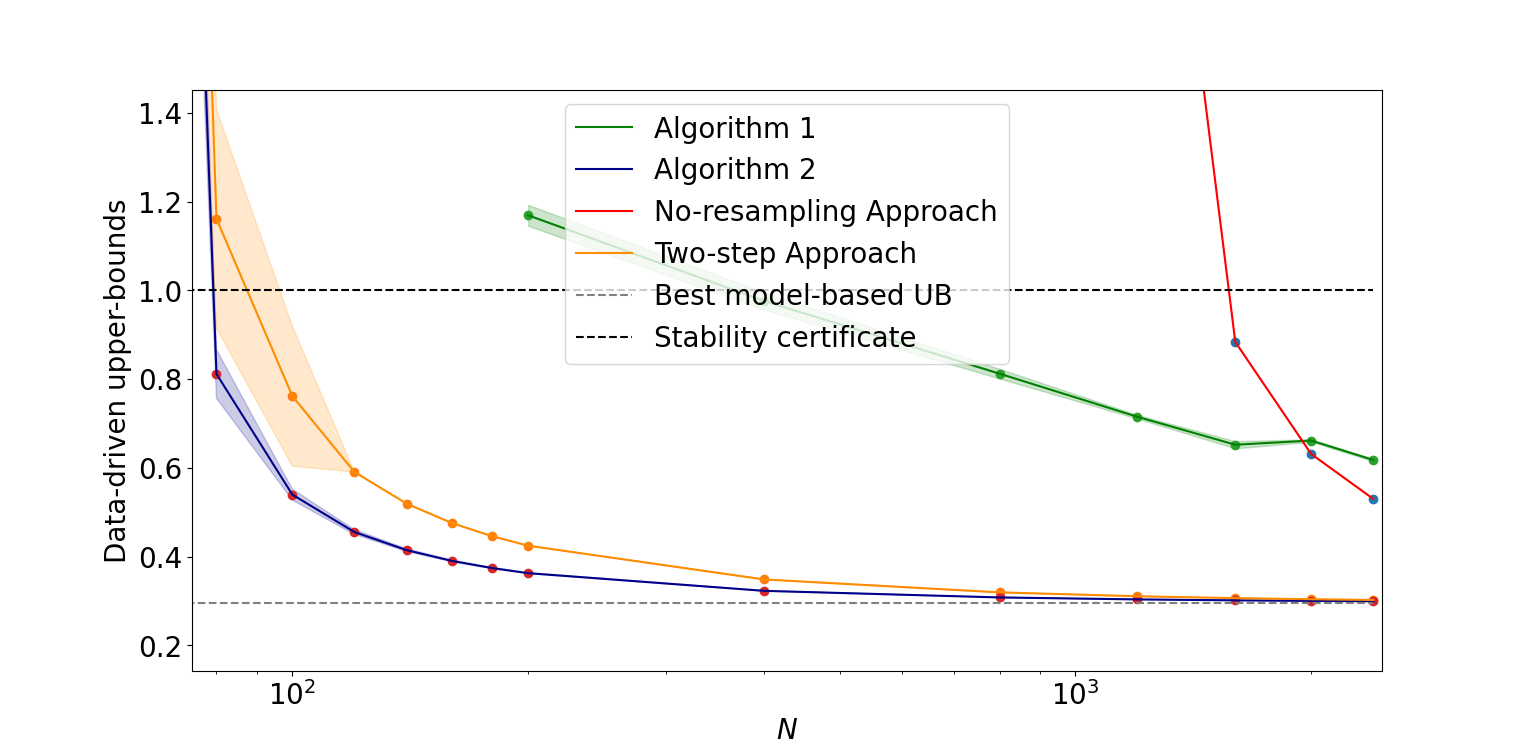}
    \caption{Comparison of the upper bounds provided by the various algorithms, as a function of the total number of samples $N$, with $\lvert\calA\rvert=3$, $\alpha=\frac{1}{|\calA|}$, $n=3$, and $\beta = 5\%$.
    Results are averaged over 25 experiences on one randomly generated system $\mathcal{A}$. The shaded area show the mean $\pm 1$ standard deviation.}
    \label{fig:comparison}
\end{figure}

\subsection{Consensus Network}
\label{sec:consensus}

We consider the problem of consensus in a hidden switching interaction network, illustrated in Figure \ref{fig:network}.
This problem can be equivalently formulated as a linear switched system in dimension $n=5$, and determining whether consensus is achieved translates to analyzing the stability of this switched linear system \cite{jadbabaie2003coordination}. Since the network is hidden and its model is unknown, we need to resort to data-driven methods to verify stability.

\begin{figure}
    \centering
    \includegraphics[width=\linewidth,trim={5mm 10mm 5mm 5mm},clip]{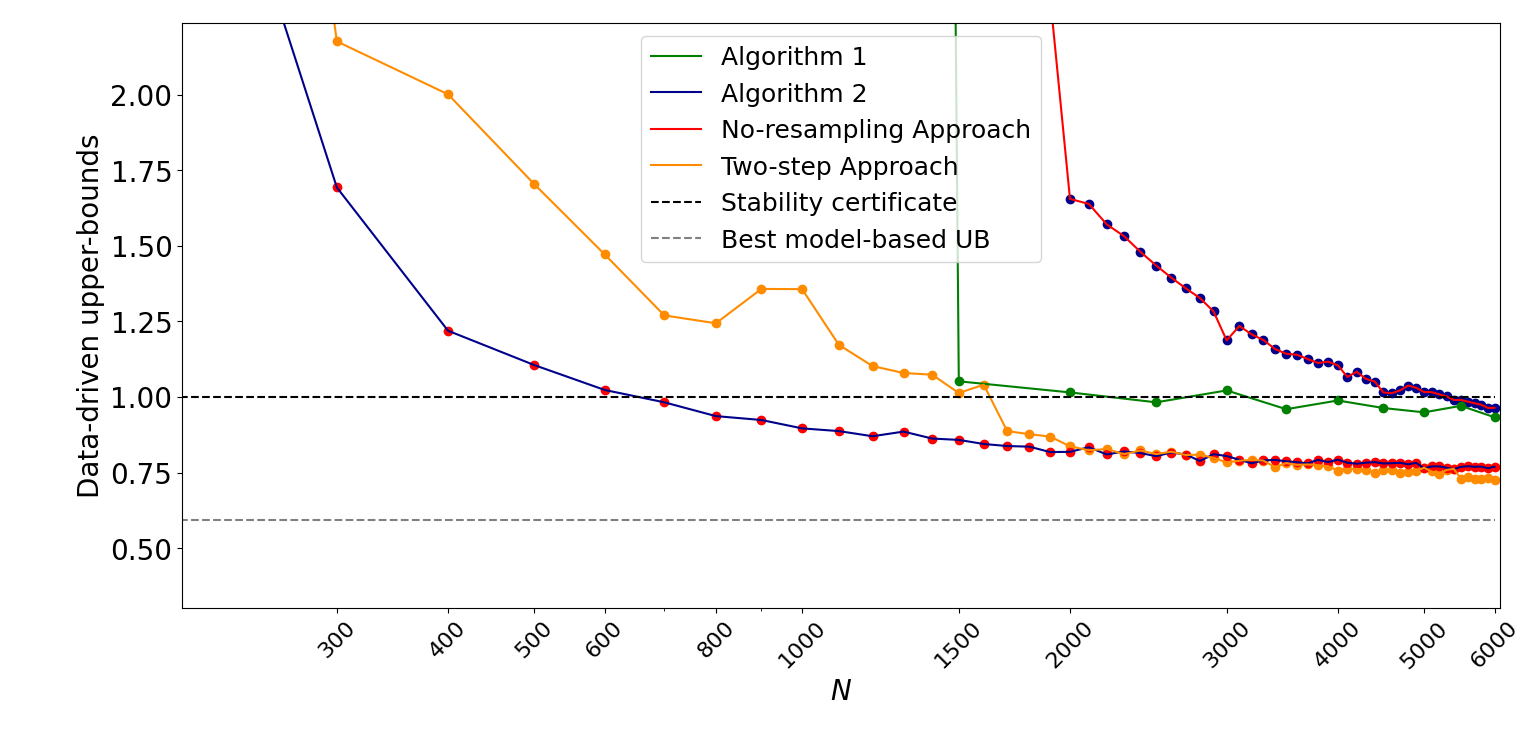}
    \caption{Data-driven upper bounds with confidence $1-\beta = 95\%$ on the JSR of the consensus network as a function of the total number of samples used $N$.}
    \label{fig:consensus}
\end{figure}

For the two-step approach from \cite{vuille2024data}, we used the dataset-splitting heuristic and parameters $\delta_1=10^3$, $\delta_2=1$.
For Algorithm \ref{alg:SGD}, we used $N_{\text{batch}} = 500$, $T = \lfloor N / N_{\text{batch}} \rfloor$, and $\eta_k = 0.3 / (k+1)$.
For Algorithm \ref{alg:HEUR}, we used $T = \lfloor N / 2 \rfloor$, $\eta_k=0.3$, $\epsilon = 10^{-4}$ and $K=10$.
We consider $\mathcal{P} \coloneqq \{P \in \SPn: I \preceq P \preceq 10^3I\}$ and for Algorithm \ref{alg:SGD}, $\mathcal{B} \coloneqq \{B \in \SPn: I \preceq B = B^T \preceq 10^3I\}$.

Figure \ref{fig:consensus} illustrates the total number of data points required to certify consensus with a confidence level of $1-\beta=95\%$ using the four data-driven methods.
The results highlight the substantial benefits of adaptive sampling approach over the basic no-resampling approach, as well as the superiority of the heuristic method (Algorithm \ref{alg:HEUR}) over the two-step approach.
Without resampling, 5400 data points were necessary to certify stability. By employing the stochastic optimization approach, this requirement is reduced to 2200 data points.
The two-step approach from \cite{vuille2024data} further improves this to 1600 data points.
However, Algorithm \ref{alg:HEUR} achieves a breakthrough, requiring only 600 data points to ensure the system's stability with 95\% confidence.

\begin{figure}
    \centering
    \vspace{2mm}
    \begin{tikzpicture}[scale=0.6]
        \node[draw, circle, fill=blue, inner sep=2pt] (1) at (0,2) {};
        \node[draw, circle, fill=blue, inner sep=2pt] (2) at (0,0) {};
        \node[draw, circle, fill=blue, inner sep=2pt] (3) at (0.5,1) {};
        \node[draw, circle, fill=blue, inner sep=2pt] (4) at (3,0) {};
        \node[draw, circle, fill=blue, inner sep=2pt] (5) at (3,2) {};
        \node[draw, circle, fill=blue, inner sep=2pt] (6) at (2.5,1) {};
        \draw[->,>=stealth] (4) -- (1);
        \draw[->,>=stealth] (5) -- (2);
        \draw[->,>=stealth] (1) -- (3);
        \draw[->,>=stealth] (5) -- (3);
        \draw[->,>=stealth] (5) -- (4);
        \draw[->,>=stealth] (6) -- (4);
        \draw[->,>=stealth] (6) -- (5);
        \draw[->,>=stealth] (2) -- (6);
        \draw[->,>=stealth] (3) -- (6);
    \end{tikzpicture}
    \hfill
    \begin{tikzpicture}[scale=0.6]
        \node[draw, circle, fill=blue, inner sep=2pt] (1) at (0,2) {};
        \node[draw, circle, fill=blue, inner sep=2pt] (2) at (0,0) {};
        \node[draw, circle, fill=blue, inner sep=2pt] (3) at (0.5,1) {};
        \node[draw, circle, fill=blue, inner sep=2pt] (4) at (3,0) {};
        \node[draw, circle, fill=blue, inner sep=2pt] (5) at (3,2) {};
        \node[draw, circle, fill=blue, inner sep=2pt] (6) at (2.5,1) {};
        \draw[->,>=stealth] (3) -- (1);
        \draw[->,>=stealth] (4) -- (1);
        \draw[->,>=stealth] (6) -- (1);
        \draw[->,>=stealth] (3) -- (2);
        \draw[->,>=stealth] (5) -- (2);
        \draw[->,>=stealth] (6) -- (2);
        \draw[->,>=stealth] (2) -- (3);
        \draw[->,>=stealth] (2) -- (4);
        \draw[->,>=stealth] (6) -- (4);
        \draw[->,>=stealth] (4) -- (5);
        \draw[->,>=stealth] (6) -- (5);
        \draw[->,>=stealth] (1) -- (6);
    \end{tikzpicture}
    \hfill
    \begin{tikzpicture}[scale=0.6]
        \node[draw, circle, fill=blue, inner sep=2pt] (1) at (0,2) {};
        \node[draw, circle, fill=blue, inner sep=2pt] (2) at (0,0) {};
        \node[draw, circle, fill=blue, inner sep=2pt] (3) at (0.5,1) {};
        \node[draw, circle, fill=blue, inner sep=2pt] (4) at (3,0) {};
        \node[draw, circle, fill=blue, inner sep=2pt] (5) at (3,2) {};
        \node[draw, circle, fill=blue, inner sep=2pt] (6) at (2.5,1) {};
        \draw[->,>=stealth] (3) -- (1);
        \draw[->,>=stealth] (4) -- (1);
        \draw[->,>=stealth] (5) -- (1);
        \draw[->,>=stealth] (6) -- (1);
        \draw[->,>=stealth] (6) -- (2);
        \draw[->,>=stealth] (6) -- (3);
        \draw[->,>=stealth] (6) -- (4);
        \draw[->,>=stealth] (1) -- (5);
        \draw[->,>=stealth] (2) -- (5);
        \draw[->,>=stealth] (4) -- (5);
        \draw[->,>=stealth] (6) -- (5);
        \draw[->,>=stealth] (1) -- (6);
        \draw[->,>=stealth] (5) -- (6);
    \end{tikzpicture}
    \caption{Three (unknown) interaction networks for the consensus problem in Sec. \ref{sec:consensus}.}
    \label{fig:network}
\end{figure}
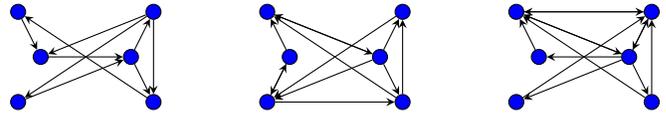



\bibliographystyle{IEEEtran}
\bibliography{myrefs}
\newpage
\section*{Appendix A}\label{sec:appendixA}

In this appendix, we compute the gradient \( \nabla_B f(B) \) for \( f(B) \coloneqq \log(\kappa(B^\top P B)) \).
As a reminder, $\kappa(A)$ is given by
\[
\kappa(A) = \left( \frac{\det(A)}{\lambdamin(A)^n} \right)^{1/2}
\]

Therefore,

\[
\begin{aligned}
f(B) &= \log\left( \left( \frac{\det(B^\top P B)}{\lambdamin(B^\top P B)^n} \right)^{1/2}\right)\\
&= \frac{1}{2} \log \det(B^\top P B) - \frac{n}{2} \log \lambdamin(B^\top P B)
\end{aligned}
\]

\noindent\textbf{Step 1: Computation of $\nabla_B \log \det(B^\top P B)$.}
Assuming $P$ is normalized such that $\det(P) = 1$, we have
\[
\det(B^\top P B) = \det(B^\top B) \cdot \det(P) = \det(B^\top B)
\]
For $X\in\SPn$, the gradient $\nabla_X \log \det(X) = X^{-1}$ (see, e.g., \cite[Appendix A.4.1]{boyd2004convex}).
Notice that $B^\top B\in\SPn$ since $B$ is invertible.
Therefore, $$\nabla_{B^\top B} \log(\det(B^\top B)) = (B^\top B)^{-1}.$$
Now, using the chain rule:
\[
\begin{aligned}
d(\log \det(B^\top B)) &= \operatorname{Tr}((B^\top B)^{-1} d(B^\top B)) \\
&= \operatorname{Tr}(B^{-1} B^{-\top} (B^\top dB + (dB)^\top B) ) \\
&= \operatorname{Tr}(B^{-1} dB) + \operatorname{Tr}(B B^{-1} B^{-\top}(dB)^\top)\\
&= \operatorname{Tr}(B^{-\top} (dB)^\top) + \operatorname{Tr}(B^{-\top}(dB)^\top)\\
&= \operatorname{Tr}(2B^{-\top} (dB)^\top).
\end{aligned}
\]
Therefore, the gradient is $\nabla_B \log \det(B^\top B) = 2B^{-\top}$.

\medskip

\noindent\textbf{Step 2: Computation of $\nabla_B\log \lambdamin(B^\top P B)$.}
Let $A = B^\top P B$ and $\lambdamin(A)$ be its minimum eigenvalue with corresponding normalized eigenvector $v$, i.e., $A v = \lambdamin(A) v$ and $\lVert v\rVert=1$. We assume that $\lambdamin(A)$ is unique (i.e., a simple root of the characteristic polynomial)\footnote{Otherwise, $\lambdamin(A)$ is not differentiable and the expression with any normalized eigenvector $v$ associated to $\lambdamin(A)$ is a subdifferential (for a proof see \cite{hiriart1999clarke}).}.
We show that the differential of $\lambdamin(A)$ is
\[
d(\lambdamin(A)) = v^\top dA v.
\]
On one hand, we have
\[
d(A v) = (dA) v + A (dv).
\]
On the other hand, we also have
\[
d(Av) = d(\lambdamin(A) v) = (d \lambdamin(A)) v + \lambdamin(A) (dv).
\]
Since $\lVert v\rVert=1$,
\[
\frac{1}{2} d(\lVert v\rVert^2) = \frac{1}{2} d(1^2)=0 = v^\top (dv).
\]
Combining the three previous equations and pre-multiplying by $v^\top$ leads to
\[
v^\top ( (dA) v + A (dv)) = (d \lambdamin(A)) \underbrace{v^\top v}_{=1} + \lambdamin(A) \underbrace{v^\top (dv)}_{=0}.
\]
Furthermore,
\[
\begin{aligned}
    v^\top A (dv) &= v^\top A^\top (dv) = (Av) (dv) = (\lambdamin(A)v)^\top (dv) \\
    &= \lambdamin(A) \underbrace{v^\top (dv)}_{=0} = 0.
\end{aligned}
\]
Thus, we finally get
\begin{equation*}
    d(\lambdamin(A)) = v^\top (dA) v,
\end{equation*}

Now, using back the expression of $A = B^\top PB$:
\[
\begin{aligned}
&d(\lambdamin(B^\top P B)) = v^\top d(B^\top P B) \, v \\
&= \operatorname{Tr}(d(B^\top PB) \, v v^\top) \\
&= \operatorname{Tr}((B^\top P (dB) + (dB)^\top P B) v v^\top) \\
&= \operatorname{Tr}(B^\top P (dB) \, v v^\top) + \operatorname{Tr}((dB)^\top P B v v^\top) \\
&= \operatorname{Tr}((B^\top P (dB) \, v v^\top)^\top) + \operatorname{Tr}(P B v v^\top (dB)^\top) \\
&= \operatorname{Tr}(v v^\top (dB)^\top P^\top B) + \operatorname{Tr}(P B v v^\top (dB)^\top) \\
&= \operatorname{Tr}(P B v v^\top (dB)^\top) + \operatorname{Tr}(P B v v^\top (dB)^\top) \\
&= \operatorname{Tr}(2 P B v v^\top (dB)^\top)
\end{aligned}
\]
Hence,
\[
\nabla_B \lambdamin(B^\top P B) = 2 P B v v^\top
\]
and
\[
\nabla_B \log \lambdamin(B^\top P B) = \frac{2 P B v v^\top}{\lambdamin(B^\top P B)}
\]

\noindent\textbf{Step 3: Combining the results to obtain $\nabla_B \log f(B)$.}
\[
\nabla_B f(B) = \frac{1}{2}\nabla_B \log \det(B^\top P B) - \frac{n}{2} \nabla_B \log \lambdamin(B^\top P B).
\]
Substituting the gradients:
\[
\nabla_B f(B) = \frac{1}{2} \left( 2B^{-\top} - \frac{2n P B v v^\top}{\lambdamin(B^\top P B)} \right).
\]
Simplifying:
\[
\nabla_B f(B) = B^{-\top} - \frac{nP B v v^\top}{\lambdamin(B^\top P B)}.
\]
\end{document}